\newcommand{\cF}{\mathcal{F}}
\newcommand{\C}{\mathbb{C}}
\newcommand{\Z}{\mathbb{Z}}
\newcommand{\cO}{\mathcal{O}}
\newcommand{\cV}{\mathcal{V}}
\newcommand{\cN}{\mathcal{N}}
\newcommand{\Pp}{\mathbb{P}^1}
\newcommand{\cE}{\mathcal{E}}
\newcommand{\R}{\mathbb{R}}
\newcommand{\cH}{\mathcal{H}}
\newcommand{\cM}{\mathcal{M}}
\newcommand{\eps}{\epsilon}
\newcommand{\cA}{\mathcal{A}}
\newcommand{\cX}{\mathcal{X}}
\newcommand{\cU}{\mathcal{U}}
\newcommand{\cZ}{\mathcal{Z}}
\newcommand{\cY}{\mathcal{Y}}
\newcommand{\la}{\lambda}
\newcommand{\fl}{\mathcal{P}}
\newcommand{\somefield}{{\mathbf k}}
\DeclareMathOperator{\Ad}{Ad}
\DeclareMathOperator{\Hilb}{Hilb}
\DeclareMathOperator{\Ext}{Ext}
\DeclareMathOperator{\ch}{ch}
\DeclareMathOperator{\Res}{Res}
\DeclareMathOperator{\codim}{codim}
\DeclareMathOperator{\Gr}{Gr}
\newtheorem*{thma}{Theorem A}
\newtheorem{thm}{Theorem}
\newtheorem{lemma}{Lemma}
\begin{document}



\title{A projection formula for the ind-Grassmannian}
\author{Erik Carlsson}

\maketitle

\abstract{
Let $X = \bigcup_k X_k$ be the ind-Grassmannian of codimension $n$
subspaces of an infinite-dimensional torus representation.
If $\cE$ is a bundle on $X$, we expect that
$\sum_j (-1)^j \Lambda^j(\cE)$
represents the $K$-theoretic fundamental class $[\cO_Y]$ 
of a subvariety $Y \subset X$ dual to $\cE^*$.
It is desirable to lift a $K$-theoretic 
``projection formula'' from the finite-dimensional 
subvarieties $X_k$, but such a statement requires
switching the order of the limits in $j$ and $k$.
We find conditions in which this may be done,
and consider examples in which
$Y$ is the Hilbert scheme of points in the plane,
the Hilbert scheme of an irreducible curve singularity,
and the affine Grassmannian of $SL(2,\C)$. In the last example, 
the projection formula becomes an instance 
of the Weyl-Ka\c{c} character formula,
which has long been recognized as the result of formally extending Borel-Weil
theory and localization to $Y$ \cite{S}. See also \cite{C3} for a proof of
the MacDonald inner product formula of type $A_n$ along these lines.
}

\section{Introduction}

Let $X$ be a smooth complex projective variety, let
\[T \circlearrowright X,\quad T = \C^* =\{z\},\]
be a one-dimensional complex torus action on $X$,
and let $\cE$ be an equivariant bundle on $X$.
The $K$-theoretic Atiyah-Bott-Lefschetz localization formula 
describes the character of the derived push forward to a point, 
also known as the equivariant Euler characteristic,
\begin{equation}
\label{kloc}
\chi_X (\cE) = \sum_i (-1)^i\ch  H^i_X(\cE) =\sum_{F \subset X^T} 
\chi_F\left(\cE_F \lambda(\cN^*_{X/F})^{-1}\right),
\end{equation}
Here $H^i$ is the \c{C}ech cohomology group, 
$\ch$ is the (Chern) character map, $F$ ranges over the fixed
components of the torus action, $\cN_{X/F}$ is its normal bundle, 
and $\lambda$ is the usual operation on $K$-theory
%
%
defined below.
See \cite{CG} for a reference.

Suppose $Y \subset X$ is an invariant subvariety 
which is the zero set of an equivariant section of a bundle $\cE^*$.
Then the fundamental class $[\cO_Y]$ is given by 
$\lambda(\cE) \in K_T(X)$, and we have the projection formula
\begin{equation}
\chi_Y\left( \gamma \right) = \chi_X\left(\gamma \lambda(\cE)\right).
\label{proj}
\end{equation}
If we apply the localization formula to either side, the resulting identity
is not mysterious. The fundamental class $\lambda(\cE)$ 
vanishes when restricted to a component $F \subset X$ that does not 
intersect with $Y$, and the two expressions are in fact equal termwise.
However, if a formula is known describing $\chi_X$ as a Laurent polynomial
rather than an unworkable rational function, then \eqref{proj}
produces such a formula for $Y$.


We will derive a version of \eqref{proj}
for several examples in
which $Y$ is an interesting moduli space, and $X$ is
the Grassmannian of codimension $n$ subspaces of an infinite-dimensional
torus representation $Z$, defined as an ind-variety, i.e. a union
of finite dimensional subvarieties,
\[\cdots \subset X_{-1} \subset X_0 \subset X_1 \subset \cdots
X,\quad \bigcup_k X_k = X.\]
%
We define the $K$-theory of this space as the inverse limit
\[\tilde{K}_T(X) = \lim_{\leftarrow} K_T(X_k) \otimes \C[[z]],\quad
\chi_X(\gamma) = \lim_{k\rightarrow \infty} \chi_X(\gamma_k) \in \C((z)).\]
%
For instance, we have a class $[\cU]\in K_T(X)$, where
$\cU_k$ is the tautological rank $n$ quotient bundle on $X_k$.

Consider the fairly general virtual bundle
\[\cE = A \cU+B\cU^*+C\cU \cU^* \in \tilde{K}_T(X),\]
where $A,B,C$ are torus characters, i.e. elements of the
equivariant $K$-theory of a point.
%
%
We then define the class $\cY=\la(\cE)$ by its components
\begin{equation}
\label{cYkint}
\cY_k = \lim_{w\rightarrow 1} 
\sum_{j\geq 0} (-w)^j \la^j(\cE)\in \tilde{K}_T(X_k),
\end{equation}
and define $\chi_{\cY}(\gamma)=\chi_{X}(\gamma \cY)$.
We will think of $\cY$ as the fundamental class of some subvariety 
$Y \subset X$, if formally we have $\chi_{\cY}(\gamma)=\chi_{Y}(i^* \gamma)$, 
even though $\cE$ may not be
an honest bundle, and $Y$ may be noncompact, 
infinite dimensional, or singular.

Our main theorem is the following:
\begin{thma}
\label{mainthmint}
Under certain conditions on $\cE,X,\gamma$, we have the following
analog of \eqref{proj}:
%
\[\chi_\cY\left(\gamma \right) = 
\sum_{j\geq 0}(-1)^j\chi_{X} \left(\gamma \lambda^j (\cE)\right).\]

%

\end{thma}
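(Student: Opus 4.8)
The plan is to split the statement into a harmless finite-dimensional identity on each $X_k$ and a single genuine analytic point, namely the exchange of $\lim_{k\to\infty}$ with the sum over $j$ flagged in the introduction.

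Fix $k$. By construction $\cY_k=\lim_{w\to 1}\sum_{j\ge 0}(-w)^j\lambda^j(\cE)$, the limit being taken in the $z$-adically complete ring $\tilde{K}_T(X_k)=K_T(X_k)\otimes\C[[z]]$; that it exists is part of the hypotheses on $\cE$, and amounts to $\operatorname{ord}_z\lambda^j(\cE)\to\infty$ as $j\to\infty$, the ``negative part'' of the virtual class $\cE$ (coming from $A$, $B$, $C$) being carried by strictly positive powers of $z$. Since $\eta\mapsto\chi_{X_k}(\gamma_k\eta)$ is $\C[[z]]$-linear and continuous for the $z$-adic topology, it commutes with the convergent series and with $\lim_{w\to 1}$, so
\[\chi_{X_k}(\gamma_k\cY_k)=\lim_{w\to 1}\sum_{j\ge 0}(-w)^j\chi_{X_k}\!\big(\gamma_k\lambda^j(\cE)\big).\]
No geometry enters here; this is already the asserted identity for a single compact $X_k$ --- the finite-dimensional version of \eqref{proj} --- and every limit in it is innocuous.

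The content of the theorem is that one may pass to $k=\infty$ inside the sum over $j$. For this I would extract from the hypotheses a $z$-adic estimate \emph{uniform in $k$}: a function $\phi(j)\to\infty$, independent of $k$, with
\[\operatorname{ord}_z\,\chi_{X_k}\!\big(\gamma_k\lambda^j(\cE)\big)\ \ge\ \phi(j)\qquad\text{for all }k,\ j.\]
That $\operatorname{ord}_z\lambda^j(\cE)$ grows with $j$ uniformly in $k$ is essentially automatic, since $\cU_k$ has rank $n$ for every $k$ and $\lambda^j$ of the fixed virtual class $\cE$ therefore has the same shape along the whole tower; the real question is whether $\chi_{X_k}$ preserves this divisibility. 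To see that it does I would run the estimate through the localization formula \eqref{kloc} on $X_k$, checking that the rational factors $\lambda(\cN^*_{X_k/F})^{-1}$ are $z$-adic units and that their effect, together with that of $\gamma_k$, is absorbed by the standing hypotheses. I expect this uniform estimate to be the crux: one must control the growth of the fixed locus of $X_k$ and of $\gamma_k$ as $k\to\infty$ simultaneously with the asymptotics in $j$, and be sure that no cancellation is being relied upon.

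Granting the estimate, the rest is formal. For each fixed $j$, $\lim_{k\to\infty}\chi_{X_k}(\gamma_k\lambda^j(\cE))=\chi_X(\gamma\lambda^j(\cE))$ exists by the definition of $\tilde{K}_T(X)$, and by the estimate the series $\sum_j(-w)^j\chi_{X_k}(\gamma_k\lambda^j(\cE))$ converges $z$-adically, uniformly in $k$ and in $w$ near $1$. Uniform convergence lets us interchange the remaining limits --- the inner $\lim_{w\to 1}$ becoming term-by-term evaluation at $w=1$ --- so that
\[\chi_\cY(\gamma)=\lim_{k\to\infty}\chi_{X_k}(\gamma_k\cY_k)=\lim_{k\to\infty}\sum_{j\ge 0}(-1)^j\chi_{X_k}\!\big(\gamma_k\lambda^j(\cE)\big)=\sum_{j\ge 0}(-1)^j\chi_X\!\big(\gamma\lambda^j(\cE)\big),\]
which is the claimed analog of \eqref{proj}.
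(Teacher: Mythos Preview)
Your sketch misreads the definition of $\cY_k$ and, as a consequence, proposes an estimate that is false. The limit in \eqref{cYkint} is \emph{not} $z$-adic: it is the value at $w=1$ of the rational function in $w$ whose Taylor series is $\sum_j(-w)^j\lambda^j(\cE)$ (see \eqref{virtuallambda} and the discussion after \eqref{cX}). In the examples the ``negative part'' of $\cE$ is not carried by strictly positive powers of $z$; for instance in \S\ref{hilbsubsec} one has $B=Z-1$, so $-\cU^*$ sits inside $\cE$ with weight zero, and the coefficients $\lambda^j(\cE)$ do not tend to zero $z$-adically on any fixed $X_k$. Correspondingly, for fixed $k$ the series $\sum_j(-w)^j\chi_{X_k}(\gamma_k\lambda^j(\cE))$ is only a formal $w$-expansion of a rational function and does not converge at $w=1$ in $\C((z))$; your uniform bound $\operatorname{ord}_z\chi_{X_k}(\gamma_k\lambda^j(\cE))\ge\phi(j)$ cannot hold. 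The remark that $\lambda(\cN^*_{X_k/F})^{-1}$ is a $z$-adic unit is also off: as $k$ grows these factors contribute unbounded (in both signs) powers of $z$.

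What the paper actually does is split $\chi_{X_k}=\chi^0_{X_k}+(\chi_{X_k}-\chi^0_{X_k})$. The zeroth-cohomology piece $f^0_k(z,w)$ \emph{does} satisfy a $z$-adic estimate in $j$ uniform in $k$ (this is \eqref{f0bound}, proved via the Borel--Weil description \eqref{martin} and Lemma~\ref{singlemma}), and that is what identifies the limit with the right side of \eqref{projeq}. The higher-cohomology piece $g_k(z,w)=f_k-f^0_k$ is handled differently: Lemma~\ref{reslemma} rewrites it via iterated residues, and Lemma~\ref{singlemma} then gives $\nu_z(g_{k,l}(z,w))\ge rmk+ak+b$ with $r\ge 1$, a bound that grows in $k$ (not in $j$) and only for $m$ large. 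This is where the hypothesis ``for large enough $m$'' in Theorem~\ref{mainthm} enters, and why condition~\ref{rescond} is needed to make Lemma~\ref{reslemma} go through. Your outline has no mechanism producing the dependence on $m$, and no analogue of the $\chi^0$ versus higher-cohomology decomposition, which is the genuine content of the argument.
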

\noindent
Essentially, this theorem gives conditions
under which we may switch the limits in $j$ and $k$ in \eqref{cYkint},
which turns out to imply the formula.
The essential part of the argument 
is to calculate the rational function in $\C(z,w)$
whose expansion in $w$ is the contribution of the higher
Cech cohomology groups to \eqref{cYkint},
and show that it vanishes to high degree at $z=0$.

We include the following examples:

\begin{enumerate}

\item \label{hilbexample} 
$Y$ is the Hilbert scheme of $n$ points in the complex plane,
and $X=G_{n,R}$, where $R$ is the total space of $\C[x,y]$. 
The imbedding is the map
which associates to a subscheme of $\C^2$ the total space its ideal. 
The projection formula becomes a power series
expansion with integer coefficients
for the Euler characteristic of a subbundle of $\cU^{\otimes m}$,
where $\cU$ is the tautological rank $n$ bundle on $Y$.
%
%
This example may also be extended
to the moduli space $\cM_{r,n}$ of higher rank sheaves (instantons), 
see \cite{Nak2} for a definition.

\item $Y$ is the Hilbert scheme of a plane curve singularity
$y^2=x^3$, and $X=G_{n,R}$ with
%
\[R=\C[x,y]/(y^2-x^3) \cong \C[u^2,u^3].\]
Again the projection formula produces a power series formula
form the euler characteristic. Since $Y$ is singular, $\chi_Y$
must be defined using virtual localization.

\item $Y$ is the affine Grassmannian of the loop group
of $SL(2,\C)$, and $X$ is the Sato
Grassmannian of half infinte-dimensional subspaces of a faithful
representation of $L\C^2$. The projection formula produces
an instance of the Weyl-Ka\c{c} character formula.
This circumvents the technical ideas behind an idea that 
was discussed by Segal in \cite{S}, and has been studied
by several authors, including generalizations to the analogous flag varieties 
\cite{Ku,T1,T2}.

\end{enumerate}
%
In \cite{C3}, the author also gives a proof of the MacDonald inner product
formula of type $A_n$ in this way, which is too involved to reproduce here. 
From this point of view, the factorization of the inner product is explained 
as coming from a localization sum concentrated at a single fixed point point,
together with the form of the Pieri rules for MacDonald polynomials.

The motivations for this paper have do to with a fascinating and well studied
interplay between the Hilbert scheme of points on a surface,
representation theory, and modular forms. In many different
studies, geometric correspondences between the Hilbert schemes 
of different points induce an action of various infinite-dimensional
Lie algebras on $\cH$, the direct sum of the cohomology groups
of $\Hilb_n S$ over all $n$, see
\cite{Ba,CO,Groj,Lic,L2,Nak1} to name a few. 
There is a related story
in $K$-theory which in many cases is based on Haiman's
character theory of the Bridgeland King and Reid isomorphism 
which identifies $K(\Hilb_n \C^2)$ as an inner-product space
with the ring of symmetric polynomials in infinitely many variables
\cite{BKR,CNO,Ne,SV}.
In some cases, the resulting character theory
leads to functional properties of the generating function 
of cohomological or $K$-theoretic constants in a variable $q$, 
over the number of points $n$ \cite{C,CO,KZ,ZHU}. 

These phenomena are closely related to a physical conjecture known
as AGT (Alday, Gaiotto, Tachikawa) \cite{AGT}, which connects 
correlation functions in four dimensional gauge theory 
with a certain Liouville theory.
In fact, there are two current mathematical proofs of this conjecture 
that proceed along these lines \cite{MO2,V2}.
It would be very desirable mathematically and physically to discover
integrals on a larger moduli space which restrict to both sides of
this dictionary under different specializations of the 
equivariant parameters. The motivation in extending the projection
formula is that interesting integrands on a Grassmannian manifold
are simply easier to construct than interesting moduli spaces.
Haiman's theory makes sense when the moduli space
is the Hilbert scheme, whereas the structures on 
the cohomology and $K$-theory of $\cM_{r,n}$ also lead to interesting
character theory. A fundamental example is the action of 
the Ka\c{c}-Moody algebra $\widehat{sl_r\C}$ on $H^*(\cM_{r,n})$ 
\cite{Lic,Nak6,Ne,NO}, which prompted the Ka\c{c}-Moody example.

\emph{Acknowledgements.}
The author would like to thank the Simons foundation, for its support,
as well as Hiraku Nakajima, Alexei Oblomnkov, and Andrei Okounkov, 
for many valuable discussions.

\section{Plethysm}

\label{contour}

Let $K_T(X)$ denote the complex equivariant $K$-theory of a smooth
complex projective variety with an action of a torus $T= (\C^*)^d$.
Let $\lambda^i$ denote the usual operation 
defined on bundles by
\[\lambda^j([\cE]) = \left[\Lambda^j(\cE)\right].\]
The total operation is defined by
\begin{equation}
\label{virtuallambda}
\lambda(\cE) = \lim_{w \rightarrow 1} \lambda^j(w\cE)=
\lim_{w \rightarrow 1} \sum_j (-1)^j w^j \lambda^j(\cE)
\end{equation}
where the limit is the analytic continuation to $w=1$ of the rational
function defined by the right hand side for $w$ near zero.
The limit exists if $\gamma=[\cE-\cF]$ for honest bundles $\cE,\cF$,
with $\lambda(\cF)$ invertible, and equals
$\lambda(\cE)\lambda(\cF)^{-1}$.

If $\gamma = \sum_I a_I x^I$ for $a_I \in \Z$, and $x^I$ are monomials in some
set of indeterminants, called \emph{plethystic}
variables, we define
\[\lambda\left(\gamma\right)=
\prod_{I} (1-x^I)^{a_I},\quad \gamma^* = \sum_I x^{-I},\]
\begin{equation}
\label{lambda}
\dim(\gamma) = \sum_I a_i,\quad
\det(\gamma) = \prod_I x^{a_I I}.
\end{equation}
Given a symmetric polynomial $f\in \Lambda_n$, we also have a
homomorphism defined in the elementary symmetric function basis by
\[\gamma \mapsto f(\gamma),\quad 
e_{i_1}\cdots e_{i_k}(\gamma) = 
\la^{i_1}(\gamma) \cdots \la^{i_k}(\gamma).\]
We may think of $\gamma$ as an element of $K_T(pt)$, when the
plethystic variables are the torus variables $z_i$.
In this paper, every variable will be considered plethystic,
meaning it counts as an indeterminant for the purposes of \eqref{lambda}.
Furthermore, we will often identify a torus representation
and its character in $\Z_{\geq 0}[z_i^{\pm 1}]$, denoting both by the same letter.
%
%
%
%
%
%
%
%

\section{The Grassmannian}

Suppose $Z$ is a representation of $T$ of dimension $d$, 
and consider the Grassmannian variety of
codimension $n$ subspaces of $Z$,
\[X= G_{n,Z} = \left\{ V \subset Z\big| \codim(V) = n\right\}.\]
There is a tautological bundle $\cV$ 
whose fiber over $V \subset Z$ 
is $V$ itself, and a rank $n$ quotient bundle $\cU = \cZ/\cV$, where
$\cZ = G_{n,Z} \times Z$.
The action of $T$ on $Z$ induces an action on the Grassmannian, and on
the above bundles.
We may consider the characters of the Cech cohomology groups
\[\chi^i (\cE) = \ch H^i_X(\cE),\quad \chi = \sum_i (-1)^i \chi^i.\]
%
%
%
%
%
Only $\chi$, however, descends to a map on $K$-theory. 

Let $P=G_{1,Z}$, and let us define a linear map
\[\xi^\eps_x : 
x^m \mapsto \chi_P^\eps(\cU^m)\in \C[z_i^{\pm 1}],\quad \eps=\emptyset,0,1,...\]
where $\chi^\emptyset=\chi$. The answer is well known to be
%
\[\xi_x^0 f(x) = [x^0]\fl_{x^{-1}} f(x)\cX_x,\quad
\xi_x^d f(x) = [x^0] \fl_{x} f(x)\cX_x,\]
\begin{equation}
\label{xieps}
\cX_x=\la(Zx^{-1})^{-1},\quad
\xi_x^i f(x)=0, \quad i \notin \{0,d\}.
\end{equation}
where $[x^i]$ denotes the coefficient of $x^i$, and
\[\fl_{x^{\pm 1}} : \somefield(x) \hookrightarrow \somefield((x^{\pm 1}))
\subset \somefield[[x^{\pm 1}]]\]
is the map that sends a rational function over $\somefield$
to its Laurent series about $x=0$ or $\infty$ respectively.

Equivariant localization gives a second expression for the Euler
characteristic,
\[\xi_x f(x) = \sum_{j} \xi_{x,j} f(x),\quad
\xi_{x,j}(f(x)) = \chi\left(\la(\cN^*_{P/F_j})^{-1}\iota_{x,j}f(x)\right),\]
\begin{equation}
\label{iota}
\iota_{x,j} : x^m \mapsto i_{F_j}^*(\cU^m) \in K(F_j)\otimes \C(z),
\end{equation}
where $F_j=G_{1,Z_j}$ are the torus fixed components of $P$, 
and $Z_j$ is the invariant subspace of $Z$ with character $z^j$.
We have that
\[K(F_j) \cong \C[y]/(y^c),\quad c = \dim F_j.\]
The restriction map is given by
\begin{equation}
\label{iFa}
\iota_{x,j}(x^m)
= z^{jm}\fl_y(1+y)^m \in \C[[y]] \rightarrow \C[y]/(y^c).
\end{equation}
The pushforward map is
\begin{equation}
\label{chiFa}
\chi: K(F_j)\rightarrow \C,\quad
y^i \mapsto {c-1 \choose i}.
\end{equation}

There is an elegant expression for the Euler characteristic in terms
of residues, which follows easily from the formulas above:
%
\[\xi_x^{0}(f(x)) = -\Res_{x=\infty} g(x),\quad 
\xi_x^{d}(f(x)) = -(-1)^d \Res_{x=0} g(x),\]
\begin{equation}
\label{res}
\xi_{x,j}(f(x)) = \Res_{x=z^j} g(x),\quad
g(x)=x^{-1}f(x)\cX_x,
\end{equation}
where
\[\Res_{x=c} f(x)= [x^{-1}] \fl_x f(x+c)\]
is the algebraic residue operation.
Then
\begin{equation}
\label{reschi}
\xi_{x}^0+(-1)^d\xi_{x}^d = \xi_{x}=\sum_j \xi_{x,j}
\end{equation}
corresponds to the fact that the sum of the residues of a meromorphic
function on $\C\Pp$ equals zero.

There is a well known formula for the general case to 
in terms of the case $n=1$:
%
\begin{equation}
\label{martin}
\chi^\eps_X(f(\cU)) = \frac{1}{n!} 
\xi^{\eps}_{x_1}\cdots\xi^{\eps}_{x_n} f(x_1,...,x_n)\Delta_x,
\end{equation}
\[\Delta_x = \lambda\left(\sum_{i\neq j} x_ix_j^{-1}\right),\quad
\eps\in \{\emptyset,0\}.\]
The $\eps=\emptyset$ case is a simple instance 
of a theorem of Shaun Martin \cite{Mar} for
general symplectic quotients, see also the Jeffrey-Kirwan 
residue formula \cite{JK}.
The $\eps=0$ case is an application of Borel-Weil theory, we refer
the reader to \cite{EF}.

\section{The fundamental class}
Suppose now that $T$ is one-dimensional with torus parameter $z$,
and define a finite-dimensional torus representation
by its character
\[Z = \sum_{i}d_iz^i \in \Z_{\geq 0}[z^{\pm 1}].\] 
Let $k,k'$ respectively denote the largest and smallest $i$ 
such that $d_i \neq 0$, and let $d=\dim(Z)$.
Fix a positive integer $n$, and let $X = G_{n,Z}$. 

Now define an element $\cX =\cX_{\cU} \in K_T(X)[[w]]$ by
%
%
%
%
\begin{equation}
\label{cX}
\cX_{\gamma} = \sum_i (-w)^i \la^i(\cE_\gamma),\quad
\cE_{\gamma} = A\gamma+B\gamma^*+C\gamma\gamma^*,
\end{equation}
for some Laurent polynomials 
\[A=\sum_i a_iz^i,\quad B=\sum_i b_i z^i,\quad C = \sum_i c_iz^i\]
with integer coefficients.
Then we have that $\cX = \fl_w \cY_{w}$ for some class
\[\cY_{w} \in K_T(X)\otimes \C(z,w),\]
%
which can easily be seen using localization
\[K_T(X) \otimes \C(z,w) \cong 
\bigoplus_{F\subset X^T} K(F)\otimes \C(z,w).\]
If this class is well defined at $w=1$, we define
\[\cY = \cY_1 \in K_T(X) \otimes \C(z).\]

Let us define
\[\chi_\cY(\gamma) = \chi_X \left(\gamma\cY\right) \in \C(z),\]
which should be thought of as the Euler characteristic of $\gamma$
over a subvariety $Y\subset X$ which is the intersection of
a section of $\cE^*$ with the zero section, even though $\cE$ is not
an honest bundle, and such a variety may not exist.
Now suppose that $\gamma=\gamma_{\cU,m}$, where
\begin{equation}
\label{gamma}
\gamma_{A,m}=\det(A)^mf(A),\quad 
f \in \Lambda,\quad m \in \Z.
\end{equation}
It follows easily from \eqref{res} and \eqref{martin}
that there are rational functions satisfying
\[f^{\eps}(z,w) \in \C(z,w),\quad
\fl_w f^\eps(z,w) = \chi^{\eps} \left(\gamma \cX\right),\]
when $\epsilon$ is zero or blank. For instance,
$f(z,w)= \chi_{X}(\gamma \cY_{w})$.
Define a rational function whose power series
in $w$ measures
the contribution from the higher cohomology groups,
\[g(z,w)=f(z,w)-f^0(z,w).\]
%
The following lemmas study the expansion of $g(z,w)$ in the
$z$ direction.

\begin{lemma}
\label{singlemma}
Let $x$ be a variable. We have
\[\nu_z (\xi^0_x (x^m\cX_x)) \geq o_{k'},\quad 
\nu_z (\xi^d_x (x^m\cX_x)) \geq o_k,\quad 
\nu_z \left(\xi_{x,i} (x^m\cX_x)\right) \geq o_i,\]
%
%
\[o_i=mi+\sum_{j \leq -i} a_j(i+j)+
\sum_{j \leq i} b_j(j-i)-\sum_{j \leq i} d_j(j-i).\]

\end{lemma}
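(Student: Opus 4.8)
The plan is to reduce everything to the residue description \eqref{res} of $\xi^0_x,\xi^d_x,\xi_{x,i}$, evaluated at $f(x)=x^m\cX_x$ with $\cX_x=\sum_i(-w)^i\la^i(\cE_x)=\la(w\cE_x)$ and $\cE_x=Ax+Bx^{-1}+C$. Expanding the plethystic product \eqref{lambda}, the meromorphic function entering \eqref{res} is
\begin{align*}
g(x) &= x^{m-1}\,\la(w\cE_x)\,\la(Zx^{-1})^{-1}\\
&= x^{m-1}\prod_j(1-wz^jx)^{a_j}\prod_j(1-wz^jx^{-1})^{b_j}\\
&\qquad\times\prod_j(1-wz^j)^{c_j}\prod_j(1-z^jx^{-1})^{-d_j},
\end{align*}
and $\xi^0_x(x^m\cX_x)=-\Res_{x=\infty}g$, $\xi^d_x(x^m\cX_x)=-(-1)^d\Res_{x=0}g$, $\xi_{x,i}(x^m\cX_x)=\Res_{x=z^i}g$. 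Each of these is a rational function of $z,w$, so I regard it inside $\C(w)((z))$ and bound its $z$-order $\nu_z$. The single computational input is the elementary observation that a binomial factor $(1-wz^eu^{\pm1})^p$, expanded as a Laurent series in $z$ over $\C(w)$, has $\nu_z=0$ when $e\ge0$ and $\nu_z=pe$ when $e<0$ — in the latter case one factors out the dominant monomial $-wu^{\pm1}z^e$ — and the same with $w$ set to $1$. Since $\nu_z$ is additive on products, does not decrease under extracting a single coefficient or applying $\partial_x$, and is at least the minimum on sums, it then suffices to read off the order of each factor and add.

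For the family $\xi_{x,i}$ I substitute $x=z^iu$ and compute $\Res_{x=z^i}g$ from the pole of order $d_i$ carried by $(1-z^ix^{-1})^{-d_i}$ (when $d_i=0$ the residue vanishes and the bound is vacuous). The substitution exposes the explicit monomial $z^{im}$, a power of $u$, and the binomial factors $\prod_j(1-wz^{i+j}u)^{a_j}$, $\prod_j(1-wz^{j-i}u^{-1})^{b_j}$, $\prod_j(1-wz^j)^{c_j}$ and $\prod_{j\ne i}(1-z^{j-i}u^{-1})^{-d_j}$; forming $\partial_u^{d_i-1}$ and evaluating at $u=1$ does not lower any $z$-order. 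Reading off orders by the rule above gives $\sum_{j\le -i}a_j(i+j)$ from the $A$-factors, $\sum_{j\le i}b_j(j-i)$ from the $B$-factors, $-\sum_{j\le i}d_j(j-i)$ from the $D$-factors (each term being $-d_j(j-i)\ge0$), and $0$ from the purely $z$-dependent factors provided $C$ has no negative-degree part (part of the standing hypotheses; in the examples $C$ is a genuine polynomial in $z$). Adding the contribution $z^{im}$ yields exactly $o_i$; the $j=\pm i$ terms may be dropped or kept since they contribute $0$.

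For the two boundary residues the same bookkeeping works after the rescalings $x=z^{k'}\tilde x$ at $x=\infty$ and $x=z^{k}\tilde x$ at $x=0$. The purpose of, say, the first rescaling is that $d_j=0$ for $j<k'$, so every factor $(1-z^{j-k'}\tilde x^{-1})^{-d_j}$ of $\la(Z\tilde x^{-1})^{-1}$ now has nonnegative internal $z$-exponent $j-k'$; hence its expansion at $\tilde x=\infty$ involves only nonnegative powers of $z$, with $\tilde x^0$-coefficient equal to $1$. Although the factors with $a_j<0$ (and, for $\xi^d_x$, with $b_j<0$) have infinite Laurent expansions, only finitely many monomial combinations reach total $\tilde x$-degree $0$, since the $\tilde x$-degree coming from $z^{mk'}\tilde x^m\prod_j(1-wz^{j+k'}\tilde x)^{a_j}$ is bounded above by a constant depending only on $m$ and $A$, while all remaining factors contribute nonpositive $\tilde x$-degree; so $[\tilde x^0]$ is a finite sum, and each term is controlled by the order rule. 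The resulting bound is $o_{k'}$, and symmetrically (rescaling by $z^k$, with the roles of $x$ and $x^{-1}$ exchanged) one gets $o_k$.

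The main obstacle is precisely this last point: a priori $g$ is a rational function whose poles $x=z^j$ accumulate toward $z=0$, so $\nu_z$ of a residue is a delicate quantity, and one must be certain that the infinitely many terms in the $\tilde x$-expansions of the $a_j<0$ and $b_j<0$ factors cannot conspire to push the order of the $\tilde x^0$-coefficient below $o_{k'}$ (resp. $o_k$). Rescaling by the extreme exponents $k',k$ of $Z$ is exactly what isolates the dominant term of $\la(Z\tilde x^{-1})^{-1}$ and reduces the estimate to a finite combinatorial check on the supports of $A,B,C$; making that check airtight — in particular handling all sign patterns of the $a_j,b_j$ and verifying the outcome is always $\ge o_{k'}$ — is the only step that is not purely formal.
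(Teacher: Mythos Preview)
Your proposal is correct and follows the same approach as the paper: rescale $x$ by the extremal weight of $Z$, expand the integrand $\cA=\cX_x\la(Zx^{-1})^{-1}$ factor by factor, and bound $\nu_z$ using multiplicativity $\nu_z(fg)\ge\nu_z(f)+\nu_z(g)$. The paper's proof is a three-line sketch of exactly this (it treats one case and declares the others similar); your write-up supplies the factor-by-factor bookkeeping that the paper omits, and you correctly flag that the $C$-factors contribute nothing only under the hypothesis $c_j=0$ for $j<0$, which the paper tacitly imports from condition~\ref{ccond} of Theorem~\ref{mainthm}.
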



\begin{proof}

Let us prove the first bound. Consider the power series
\[\fl_x\left(\cA\right) \big|_{x=z^{-k} x}=
\sum_i f_i(z,w) x^i,\quad
\cA =\cX_x \la(Zx^{-1})^{-1}.\]
Using \eqref{res}, it suffices to show that
\[\nu_z (f_i(z,w)) \geq o_k.\]
To prove this we simply study each factor in $\cA$ separately and use
\[\nu_{z}(fg) \geq \nu_z(f)+\nu_z(g).\]

The others are similar.

\end{proof}


\begin{lemma}
\label{reslemma}
Suppose condition \ref{rescond} of theorem \ref{mainthm} below 
is satisfied. Then
\[g(z,w) = \sum_{r\geq 1} (-1)^{rd}{n \choose r} f^{r,n-r}(z,w),\]
where
\[f^{r,s}(z,w) = \sum_{j_1,...,j_s} \Res_{\{y_q=z^{j_q}\}}
\Res_{\{x_{p}=0\}} \Omega_{r,s},\]
\[\Omega_{r,s} = \Omega_{x_1+\cdots+x_r+y_1+\cdots+y_s},\quad
\Res_{\{e_1,...,e_k\}}=\Res_{e_1}\cdots\Res_{e_k},\]
\[\Omega_{A} = \frac{1}{n!}\gamma_{A}\la\left(w\cE_{A}\right)
\la(ZA^*)^{-1}\Delta_A.\]
%
\end{lemma}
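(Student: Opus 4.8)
The plan is to combine the residue formula \eqref{res} with Martin's formula \eqref{martin} to express both $f(z,w)$ and $f^0(z,w)$ as multi-residue integrals, and then subtract. By \eqref{martin}, for $\gamma = \gamma_{\cU,m}$ we have
\[
\chi^\eps_X(\gamma\cX) = \frac{1}{n!}\,\xi^\eps_{x_1}\cdots\xi^\eps_{x_n}\,\bigl(\gamma_{x}\,\cX_{x_1+\cdots+x_n}\,\Delta_x\bigr)
\]
for $\eps \in \{\emptyset, 0\}$, where I write $x = x_1+\cdots+x_n$ as a plethystic sum so that $\gamma_x = \det(x)^m f(x)$, $\cE_x = Ax + Bx^* + Cxx^*$, and $\cX_{x} = \la(w\cE_x)$ (using that $\cX_\gamma = \la(w\cE_\gamma)$ when expanded in $w$). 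First I would apply \eqref{res} to each one-variable operator $\xi^\eps_{x_p}$. For $\eps = 0$, each $\xi^0_{x_p}$ is $-\Res_{x_p = \infty}$ of $x_p^{-1}(\cdots)\cX_{x_p}$, whereas for $\eps = \emptyset$, each $\xi_{x_p}$ equals the sum $\sum_{j} \Res_{x_p = z^j}$ over finite fixed components, which by \eqref{reschi} equals $\Res_{x_p=\infty}$ plus $(-1)^d\Res_{x_p=0}$. The key algebraic point is that each factor $\cX_x \la(Zx^*)^{-1}$ — the combination appearing in $\Omega_A$ — is exactly the integrand $g(x)$ of \eqref{res} up to the extra $x^{-1}$ and the $\gamma$, $\Delta$ decorations; the factor $\la(ZA^*)^{-1}$ is $\cX_x$ in the notation of \eqref{xieps} applied with the representation $Z$, accounting for the normal-bundle contributions.

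Next I would expand the product $\prod_p \xi_{x_p}$. Since $\xi_{x_p} = \xi^0_{x_p} + (-1)^d\xi^d_{x_p}$ by \eqref{reschi}, expanding the $n$-fold product and subtracting the all-$\xi^0$ term (which is precisely $n!\, f^0(z,w)$ by \eqref{martin} with $\eps = 0$) leaves
\[
g(z,w) = f - f^0 = \frac{1}{n!}\sum_{r=1}^{n}\binom{n}{r}(-1)^{rd}\,\Bigl(\xi^d_{x_1}\cdots\xi^d_{x_r}\,\xi^0_{x_{r+1}}\cdots\xi^0_{x_n}\Bigr)\bigl(\gamma_x\,\cX_x\,\Delta_x\bigr),
\]
where the binomial counts the ways to choose which $r$ of the $n$ variables get the $\xi^d$ operator (symmetry of the integrand in $x_1,\dots,x_n$ lets me take the first $r$). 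Now I apply \eqref{res} once more: $\xi^d_{x_p}$ is $-(-1)^d\Res_{x_p=0}$ and $\xi^0_{x_p}$ is $-\Res_{x_p=\infty}$. For the $\eps=0$ variables, converting $\Res_{x_p=\infty}$ back to a sum of residues at the finite fixed points $x_p = z^{j_p}$ of $P = G_{1,Z}$ — again via \eqref{reschi}, now run in reverse — produces the sum $\sum_{j_1,\dots,j_s}$ over fixed-point indices in the definition of $f^{r,s}$. Renaming the first $r$ variables $x_1,\dots,x_r$ (residues at $0$) and the last $s = n-r$ variables $y_1,\dots,y_s$ (residues at $z^{j_q}$), and collecting the sign and $1/n!$ into $\Omega_A$, gives exactly the claimed formula with $\Omega_{r,s} = \Omega_{x_1+\cdots+x_r+y_1+\cdots+y_s}$.

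The step where I expect the real work is the bookkeeping of signs and of \emph{which} conversion between "residue at $\infty$" and "sum over finite fixed points" is legitimate — this is where the hypothesis (condition \ref{rescond} of Theorem \ref{mainthm}) enters. Concretely, \eqref{reschi} is the statement that all residues of a rational function on $\mathbb{P}^1$ sum to zero, but here the integrand is $\gamma_x\,\cX_x\,\la(ZA^*)^{-1}\Delta_x$ as a function of a single $x_p$ with the other variables frozen, and for the residue-at-$\infty$ term to split cleanly as (sum over $x_p = z^j$) $+\ (-1)^d\Res_{x_p=0}$ one needs that this rational function genuinely has no poles other than at $0$, $\infty$, and the points $z^j$ — in particular that the $\Delta_x$ factor and the $\la(w\cE_x)$ factor do not introduce spurious poles in $x_p$ at the values where the residue manipulation is performed, and that the expansion direction ($\fl_w$) is compatible with taking these residues in $w$-power-series. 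Condition \ref{rescond} is exactly the hypothesis that rules out the unwanted poles, so that \eqref{res} and \eqref{reschi} may be applied term by term; granting it, the remainder is the routine sign-tracking sketched above.
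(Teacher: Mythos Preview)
Your overall architecture matches the paper's: use \eqref{martin} together with $\xi = \xi^0 + (-1)^d\xi^d$ from \eqref{reschi} to write $g = f - f^0$ as a signed sum of terms with $\xi^d$ on $r$ variables and the localization sum $\xi = \sum_j \xi_{\cdot,j}$ on the remaining $s=n-r$ variables, and then translate each operator into a residue via \eqref{res}. (Minor point: the paper puts $\xi$, not $\xi^0$, on the $y$-variables; your ``convert $\Res_{\infty}$ back to finite fixed points'' is the same thing, but note that the conversion also produces an extra $\Res_{0}$ term, so the bookkeeping is cleaner if you expand $\xi^0=\xi-(-1)^d\xi^d$ first.)

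The real gap is in your last paragraph. The issue is \emph{not} that $\la(w\cE_A)$ might introduce spurious poles in $x_p$ and condition~\ref{rescond} rules them out: those poles are genuinely present. The factor $\la(wC\alpha\alpha^*)$ contributes $(1-wz^i y_q x_p^{-1})^{c_i}$, which for $c_i<0$ has a pole at $x_p = wz^i y_q$, and nothing in condition~\ref{rescond} removes it. The point is rather that the quantities $f,f^0,g$ are defined by first expanding $\cX$ as a power series in $w$ and \emph{then} applying the $\xi$-operators, so what you obtain from the decomposition is $\tilde f^{r,s}$ with $\fl_w\tilde f^{r,s}=\sum_j\Res_{\{y_q=z^{j_q}\}}\Res_{\{x_p=0\}}\fl_w\Omega_{r,s}$, whereas the lemma asserts equality with $f^{r,s}$ in which the residues act on the rational function $\Omega_{r,s}$ directly. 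These differ exactly by the commutator $[\Res_{x_p=0},\fl_w]\Omega_{r,s}$, which the paper computes as $\fl_w\sum_{q,i}\Res_{x_p=wz^iy_q}\Omega_{r,s}$ (the $w$-dependent poles cross the contour as $|w|\to 0$). Condition~\ref{rescond} enters only now: after substituting $x_p=wz^iy_q$, one checks that the resulting function of $y_q$ vanishes at $y_q=z^j$ to order $a_{-i-j}+b_{i+j}+c_i-d_j+1\geq 0$, so the subsequent residue $\Res_{y_q=z^{j_q}}$ kills the error term. That vanishing-order computation is the heart of the lemma, and your sketch does not contain it.
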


\begin{proof}
Using \eqref{res}, \eqref{reschi}, \eqref{martin}, we may write
\[\fl_w g(z,w) = \sum_{r\geq 1} (-1)^{rd}{n \choose r} 
\fl_w \tilde{f}^{r,n-r}(z,w),\]
where $\tilde{f}^{r,s}(z,w)$ is defined by
\[\fl_w \tilde{f}^{r,s}(z,w)=\xi^d_{x_1}\cdots\xi^d_{x_r}
\xi_{y_{1}}\cdots\xi_{y_s}\cX_{x_1+\cdots+x_r+y_1+\dots+y_s}=\]
%
%
%
%
\[\sum_{j_1,...,j_{s}}\Res_{\{y_q=z^{j_q}\}}
\Res_{\{x_p=0\}} \fl_w \Omega_{r,s}.\]
It suffices to prove that
\begin{equation}
\label{pxycom}
\left(\Res_{\{y_q=z^{j_q}\}}\Res_{\{x_p=0\}} \fl_w-
\fl_w \Res_{\{y_q=z^{j_q}\}}\Res_{\{x_p=0\}}\right)\Omega_{r,s}=0,
\end{equation}
so that $f^{r,s}(z,w)=\tilde{f}^{r,s}(z,w)$.

We now claim the following commutation relations between $\fl_w$
and the residue 
%
\begin{equation}
\label{pxcom}
\left(\Res_{x_p=0} \fl_w-\fl_w \Res_{x_p=0}\right)\Omega_{r,s}=
\fl_w \sum_{q,i} \Res_{\{x_p=wz^{i}y_{q}\}}\Omega_{r,s},
\end{equation}
\begin{equation}
\label{pycom}
\left(\Res_{\{y_q=z^{j_q}\}} \fl_w-\fl_w \Res_{\{y_q=z^{j_q}\}}\right)
\Res_{\{x_p=0\}}\Omega_{r,s}=0.
\end{equation}
Both are algebraic facts which may be 
described in terms of formal distributions,
but it is simpler to imagine the residues about
zero as contours about $|x_p|=\epsilon$.
The first commutator may be thought of as the residues
picked up from from swapping the range
$|w|\ll|x_p|$ for $|x_p|\ll|w|$.
The second is also straightforward.


We have that $\Res_{x_p=wz^{i}y_{q}} \Omega_{r,s}$
vanishes to order
\[a_{-i-j}+b_{i+j}+c_{i}-d_{j}+1\]
at $y_{q}=z^j$.
By condition \ref{rescond}, this number is nonnegative, so that
\begin{equation}
\label{xvan}
\Res_{\{y_q=z^{j_q}\}} \Res_{x_p=wz^{i}y_{q}} \Omega_{r,s}=0.
\end{equation}
Furthermore, taking additional residues at $x_{i_p}=0$ can only
increase the degree of vanishing.
Applying this to \eqref{pxcom} and combining with \eqref{pycom}
establishes \eqref{pxycom}, proving the lemma.

\end{proof}

\section{The main theorem}
Now suppose that $Z$ is an infinite-dimensional representation,
defined via its character
\[Z = \sum_{i} d_i z^i \in \Z_{\geq 0}((z)).\]
Suppose furthermore that $A,B \in \Z((z))$, again with coefficients
$a_i,b_i$, respectively.
Let $X=G_{n,Z}$ be the ind-Grassmannian of codimension $n$ subspaces of $Z$,
taken as a limit of subspaces
\[X = \bigcup_k X_k,\quad X_k = G_{n,Z_{\leq k}},\]
where $Z_{\leq k}$ is the direct sum of the subspaces with torus
weight $j \leq k$. 

We set
\[\tilde{K}_T(X) = \lim_{\leftarrow} \tilde{K}_T(X_k),\quad
\tilde{K}_T(X_k) = K_T(X_k) \otimes \C[[z]].\]
The above inverse system is determined by the pullback maps $i^*_{ab}$ where
\[i_{ab} : X_a \rightarrow X_b,\quad
V \mapsto \pi^{-1}_{ba}(V),\quad a \leq b,\]
and $\pi_{ab} : Z_{\leq b} \rightarrow Z_{\leq a}$ is the projection map. 
Given an element $\gamma \in \tilde{K}_T(X)$, with components $\gamma_k$,
let us define
\[\chi_{X}(\gamma) = \lim_{k\rightarrow \infty} \chi_{X_k}(\gamma_k)
\in \C((z)),\]
when that limit exists. One way to guarantee existence is to
have that $i^*_F\gamma=0$ for all but finitely many fixed components
$F\in X^T$. If this is the case, we will say that $\chi_X(\gamma)$
is well defined.

If it exists, we define an element $\cY \in \tilde{K}_T(X)$ by
\[\cY_{k} = \lim_{l\rightarrow \infty} \fl_z \cY_{k,l,1} \in \tilde{K}_T(X_k),\]
where $\cY_{k,l,w}$ is the class obtained by replacing $A,B$
by the finite dimensional spaces $A_{\leq l},B_{\leq l}$ in the definition
of $\cY_w$.

%
%

\begin{thm}
\label{mainthm}
Suppose $\gamma=\gamma_{\cU,m}$,
and the following conditions are satisfied:

\begin{enumerate}[a)]
\item $a_i\geq 0$ for $i\leq -k'$.
\label{acond}
\item \label{bcond} $b_i\geq 0$ and $b_i=d_i$ for large enough $i$.
\item \label{ccond} $c_i=0$ for $i\leq 0$, and $c_i=0$ for large enough $i$.
\item \label{rescond}
For any weights $i,j \in \Z$ with $d_j\neq 0$, $c_i<0$,
we have
\[a_{-i-j}+ b_{i+j}+c_i-d_j+1\geq 0.\]
%
\item \label{defcond} Both $\cY$ and $\chi_{\cY}$ are defined
in the sense described above.
\end{enumerate}
Then for large enough $m$ we have the projection formula,
\begin{equation}
\label{projeq}
\chi_{\cY} \left(\gamma\right) = 
\sum_{j\geq 0}(-1)^j\chi_{X} \left(\gamma\lambda^j (\cE)\right).
\end{equation}
\end{thm}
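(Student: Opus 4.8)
The plan is to reduce the infinite-dimensional statement to the finite-dimensional projection formula \eqref{proj} applied on each $X_k$ with a truncated bundle, and then to control the two limits ($j\to\infty$ in the $\lambda$-expansion, and $k\to\infty$ in the exhaustion) by showing that the ``error term'' — the contribution of higher cohomology, encoded in $g(z,w)$ — vanishes to high $z$-order. Concretely, I would first fix $l$ large and work with the finite-dimensional bundle $\cE^{(l)} = A_{\leq l}\cU + B_{\leq l}\cU^* + C\cU\cU^*$ on $X_k$. On $X_k$ this is an honest virtual bundle of the form $[\cE]-[\cF]$ with $\lambda(\cF)$ invertible (conditions \ref{acond}--\ref{ccond} are exactly what makes the relevant determinant and normal-bundle factors units at the fixed points, via the localization decomposition $K_T(X_k)\otimes\C(z,w) \cong \bigoplus_F K(F)\otimes\C(z,w)$), so the finite-dimensional projection formula \eqref{proj} gives $\chi_{X_k}(\gamma \cY_{k,l,1}) = \sum_{j\geq 0}(-1)^j \chi_{X_k}(\gamma\,\lambda^j(\cE^{(l)}))$ — here the sum on the right is genuinely finite for each fixed $k,l$ because $\cE^{(l)}$ has bounded rank on $X_k$.

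The heart of the matter is then to take $l\to\infty$ and $k\to\infty$ and check that all four quantities
\[\chi_{X_k}(\gamma\cY_{k,l,1}),\quad \sum_{j}(-1)^j\chi_{X_k}(\gamma\lambda^j(\cE^{(l)})),\quad \chi_{\cY}(\gamma),\quad \sum_j(-1)^j\chi_X(\gamma\lambda^j(\cE)),\]
converge and to the matching limits. For this I would use Lemma~\ref{reslemma}: the difference between $\chi_X(\gamma\cY_w)$ and its ``$\eps=0$'' (Borel--Weil) truncation is $g(z,w) = \sum_{r\geq 1}(-1)^{rd}\binom{n}{r} f^{r,n-r}(z,w)$, a rational function in $\C(z,w)$ built from the explicit residue integrand $\Omega_{r,s}$. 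The key estimate is the $z$-valuation bound of Lemma~\ref{singlemma}, which says each localization term $\xi_{x,i}(x^m\cX_x)$ vanishes in $z$ to order at least $o_i = mi + \sum_{j\leq -i}a_j(i+j) + \sum_{j\leq i}b_j(j-i) - \sum_{j\leq i}d_j(j-i)$. Because condition \ref{bcond} forces $b_i = d_i$ for large $i$ and condition \ref{acond} controls $a_i$ for $i\leq -k'$, the coefficient of $i$ in $o_i$ grows linearly in $m$, so \emph{for large enough $m$} we have $o_i \to +\infty$ as $|i|\to\infty$. This is precisely what makes $\chi_X(\gamma\cY)$ well defined as an element of $\C((z))$ — only finitely many fixed components $F$ contribute modulo any fixed power of $z$ — and it is what allows interchanging $\lim_k$, $\lim_l$, and $\sum_j$: each of these operations changes the expression only in $z$-degrees beyond a bound that marches off to infinity with the truncation parameter. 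I would organize this as: (i) $z$-adic convergence of $\cY_{k,l,1}\to\cY_k$ as $l\to\infty$ (from Lemma~\ref{singlemma} applied with the tail $A_{>l}, B_{>l}$, which by \ref{bcond}, \ref{ccond} contributes only high $o_i$); (ii) $z$-adic convergence $\chi_{X_k}(\gamma\cY_k)\to\chi_X(\gamma\cY)$ as $k\to\infty$ (condition \ref{defcond} and the finite-support-mod-$z^N$ property); (iii) the same for the right-hand side, using that $\lambda^j(\cE)$ for large $j$ contributes only high $z$-order by the same valuation bound applied through \eqref{martin}.

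The one genuinely delicate point — and where Lemma~\ref{reslemma} does the real work — is justifying that $\sum_j(-1)^j\chi_X(\gamma\lambda^j(\cE))$ equals $\chi_\cY(\gamma) = \chi_X(\gamma\cY_1)$ rather than merely its Borel--Weil part: a priori, summing $\lambda^j$ term-by-term and then pushing forward could pick up spurious contributions from higher Cech cohomology that do not resum correctly. This is exactly the content of \eqref{pxycom}: the residue operations $\Res_{x_p=0}$ and the Laurent-expansion map $\fl_w$ commute up to extra residues at $x_p = w z^i y_q$, and condition \ref{rescond} — the inequality $a_{-i-j}+b_{i+j}+c_i-d_j+1\geq 0$ whenever $c_i<0$ and $d_j\neq 0$ — forces those extra residues to vanish (equation \eqref{xvan}). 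So $g(z,w)$ itself is ``clean'': its power series in $w$ really does compute the higher-cohomology contribution as a well-behaved rational function, and one checks it is regular at $w=1$ with the correct $z$-expansion. I would therefore expect the main obstacle to be bookkeeping: carefully matching the order of the three limits against the valuation bounds so that no step secretly requires uniformity that isn't there. Once (i)--(iii) and the commutation \eqref{pxycom} are in hand, assembling \eqref{projeq} is formal — it is the $l\to\infty$, $k\to\infty$ limit of the finite-dimensional \eqref{proj}, with Lemmas~\ref{singlemma} and~\ref{reslemma} certifying that every limit and sum involved converges $z$-adically to what it should.
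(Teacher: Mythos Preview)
Your opening move has a genuine gap. You invoke~\eqref{proj} to obtain a finite-$k,l$ identity $\chi_{X_k}(\gamma\,\cY_{k,l,1}) = \sum_{j}(-1)^j\,\chi_{X_k}(\gamma\,\lambda^j(\cE^{(l)}))$, claiming the sum terminates because $\cE^{(l)}$ has bounded rank. But $\cE^{(l)}$ is only virtual --- the term $C\cU\cU^*$ carries negative coefficients (e.g.\ $C=M-1$ in every example of the paper) --- so $\lambda^j(\cE^{(l)})$ never terminates, and~\eqref{proj}, which concerns an honest bundle whose section cuts out a subvariety, does not apply. The left side above is defined by analytic continuation in $w$ to $w=1$ (equation~\eqref{virtuallambda}); the right side is the $w=1$ value of its power-series expansion. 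For a fixed finite $k$ these need not agree, and arranging them to agree in the limit is precisely the content of the theorem, not its starting point.

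The paper's route is accordingly different: it never writes down a finite-$k$ identity. Instead it introduces the Borel--Weil part $f_k^0(z,w)=\chi^0_{X_k}(\gamma\cY_{k,w})$ alongside $f_k(z,w)=\chi_{X_k}(\gamma\cY_{k,w})$, and shows separately that (i) the left side of~\eqref{projeq} is $\lim_k\fl_z f_k(z,1)$ by definition; (ii) the right side is $\lim_k\fl_z f_k^0(z,1)$ --- this step uses vanishing of higher cohomology on each fixed $\gamma\lambda^j(\cE)$ once $\dim Z_{\leq k}$ is large, the uniform-in-$k$ estimate $\nu_z(e_{k,i})\to\infty$ on the $w$-coefficients of $\fl_w f_k^0$ (Lemma~\ref{singlemma} with condition~\ref{ccond}), and a zero-divisor argument that $\fl_z\fl_w f_k^0=\fl_w\fl_z f_k^0$; and (iii) $g_k=f_k-f_k^0\to 0$ in $\C((z))$. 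Lemmas~\ref{singlemma} and~\ref{reslemma} enter only in step~(iii): Lemma~\ref{reslemma} writes $g_{k,l}=\sum_{r\geq 1}(-1)^{rd}\binom{n}{r}f^{r,n-r}_{k,l}$, and Lemma~\ref{singlemma} gives $\nu_z(f^{r,s}_{k,l})\geq rmk+ak+b$ with $a,b$ independent of $l$ (condition~\ref{bcond} enters here, since $o_k$ depends only on the differences $b_i-d_i$), so for $m>-a$ one gets $\fl_z g_k\to 0$, including at $w=1$. Your reading of~\eqref{pxycom} is thus off by one level: that commutation is used \emph{inside} the proof of Lemma~\ref{reslemma} (via~\eqref{xvan} and condition~\ref{rescond}) to obtain the clean residue decomposition of $g$, not to justify the limit interchange on $f$ directly --- that interchange is carried out on $f^0$, in step~(ii).
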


\begin{proof}
Since $f(\cU)$ is arbitrary,
we may assume without loss of generality that $A \in \Z_{\geq 0}[z^{\pm 1}]$.
Let $f^\eps_{k,l}(z,w)$ and $g_{k,l}(z,w)$ denote the rational functions
from the last section with $Z_{\leq k},B_{\leq l}$ in place of $Z,B$,
and let
\[f^\eps_{k}(z,w) = \lim_{l\rightarrow \infty} 
f^\eps_{k,l}(z,w),\]
%
pointwise. By condition \ref{defcond}, $f_k(z,w)$ is defined at $w=1$.
By definition, the limit over $k$ of $\fl_z f_k(z,1)$ 
agrees with the left hand side of \eqref{projeq}. 

We next claim that the limit of $f_k^0(z,1)$ 
agrees with the right hand side of \eqref{projeq}.
Since the higher cohomology of $f(\cU)$
vanishes for large enough dimension of $Z_{\leq k}$, we have
%
\begin{equation}
\label{ff0}
\lim_{k \rightarrow \infty} \fl_z\fl_wf_k(z,w)=
\lim_{k\rightarrow \infty} \fl_z\fl_w f_k^0(z,w).
\end{equation}
Using condition \ref{ccond}, lemma \ref{singlemma}, 
and \eqref{martin}, we can see that 
\begin{equation}
\label{f0bound}
\lim_{i\rightarrow \infty} \nu_z(e_{k,i}(z)) = \infty,\quad
\fl_w f_k^0(z,w) = \sum_i e_{k,i}(z) w^i.
\end{equation}
uniformly in $k$. This shows that the left side of \eqref{ff0}
converges in $\C((z))$ at $w=1$. 
It also shows that
\[(\fl_z\fl_w-\fl_w\fl_z)f_k^0(z,w) \in \C((w))((z)),\]
whereas a priori, it is only an element of $\C[[z^{\pm 1},w^{\pm 1}]]$.
On the other hand, by multiplying by the denominator of $f_k^0(z,w)$, 
we see that it is a zero divisor,
and so must be zero. But the left side of \eqref{ff0}
agrees with the right side of \eqref{projeq} at $w=1$,
proving the claim.

We now prove that they are equal.
By lemma \ref{singlemma}, we find that
\[\nu_z\left(f_{k,l}^{r,s}(z,w)\right) \geq rmk+ak+b\]
for some consants $a,b$. The constants may be chosen independently
of $l$, by condition \ref{bcond},
and because $o_k$ depends only on the differences $b_i-d_i$
for $i\leq k$. By lemma \ref{reslemma}, $g_{k,l}(z,w)$ 
is a linear combination of $f^{r,s}_{k,l}(z,w)$ for $r\neq 0$, so that
\[\lim_{k\rightarrow \infty} \fl_zf_k(z,w)-
\fl_z f_k^0(z,w)=
\lim_{k \rightarrow \infty} \lim_{l \rightarrow \infty}
\fl_z g_{k,l}(z,w) = 0,\]
including the value $w=1$, as long as $m>-a$.

\end{proof}

\section{Examples}

\subsection{The Hilbert scheme of points in the plane}

\label{hilbsubsec}

Let $Y=\Hilb_n \C^2$, the Hilbert scheme of $n$ points in the plane.
There is a standard torus action on $Y$ induced by 
pullback of ideals from the action on the plane
\begin{equation}
(z_1,z_2) \cdot (x,y) = (z_1^{-1}x,z_2^{-1}y).
\label{hilbtorus}
\end{equation}
%
%
%
The fixed points of $Y$ are the monomial ideals indexed by Young diagrams
%
\[I_\mu = (x^{\mu_1},x^{\mu_2-1}y,...,y^{\ell(\mu)}) \subset R = \C[x,y].\]
The character of the cotangent space to this point 
is a polynomial in $z_i$ with nonnegative integer coefficients
summing to $\dim(Y)=2n$. By deformation theory and 
a standard \c{C}ech cohomology 
argument, it is given by
\begin{equation}
T_\mu Y = \chi (R,R)-\chi(I_\mu,I_\mu),
\label{Thilb}
\end{equation}
where $\chi$ is the Euler characteristic
\[\chi(F,G) = \sum_{i} (-1)^i\ch \Ext^i_{R}(F,G).\]
There is an interesting formula for this polynomial
in terms of the arm and leg lengths of boxes in $\mu$,
which we will not need. See
\cite{CO,Nak2} for a reference on this calculation.

Now let $Z$ be the total space of $R$, so that
\[Z = \fl_{z_1,z_2} M^{-1},\quad M = (1-z_1)(1-z_2).\]
Let $X=G_{n,Z}$, and let
\[A = -z_1z_2,\quad B = Z-1,\quad C = M-1,\]
with $\cE$ and $\cY$ as in the last section. 
There is an injection $Y \hookrightarrow X$ determined by
sending an ideal to its total space in $Z=R$, which
comes up in the construction of $Y$.
The images of the fixed points are
\[V_\mu = H^0(I_\mu) \subset Z,\quad 
U_\mu = Z/V_\mu=\sum_{(i,j)\in \mu} z_1^j z_2^i,\]
where $(i,j)$ are the coordinates of a box in $\mu$.

\begin{lemma}
\label{vanishing}
Suppose $U \in X^T$ is an invariant subspace.
%
\begin{enumerate}[a)] 
\item \label{hilbvanish}
We have that $\lambda\left(\cE_U\right)$
vanishes unless $U=U_\mu$ for some $\mu$.
\item \label{tgrasshilb}
If $U = U_\mu$, then
\[T_\mu^*X-T^*_\mu Y = \cE_\mu,\quad \cE_\mu=\cE_{U_\mu}.\]
\end{enumerate}
\end{lemma}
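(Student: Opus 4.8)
The plan is to prove both parts by a direct computation with torus characters, exploiting the explicit description of the fixed points $U_\mu$ and the formula \eqref{Thilb} for the cotangent space $T_\mu Y$. For part \ref{hilbvanish}, recall that $\la(\cE_U)=0$ precisely when $\cE_U$ contains the trivial character $z^0=1$ with positive coefficient, i.e.\ when some summand $1-x^I$ in $\la(\cE_U)$ has $x^I=1$. So I would write out $\cE_U = A U + B U^* + C U U^*$ at the fixed point, substituting $A=-z_1z_2$, $B=Z-1$, $C=M-1$ with $M=(1-z_1)(1-z_2)$ and $Z=\fl_{z_1,z_2}M^{-1}=\sum_{i,j\ge 0}z_1^iz_2^j$. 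Setting $U = \Cs$-character of $Z/V$ for the invariant subspace $V$, the combination $BU^* + CUU^* = (Z-1)U^* + (M-1)UU^* = (Z - UM)U^* - U^*$ (using $1 - UU^* $ rearrangements) should collapse: the key algebraic identity is that $Z - UM$ is the character of the "ideal" $V$ precisely when $V = H^0(I_\mu)$, and more generally one checks that $Z\cdot U^* - M U U^* - U^* + A U$ has no constant term unless $U$ is the staircase character of a Young diagram. The cleanest route is to observe that $M\cdot Z = 1$ as formal power series in $z_1,z_2$, so $(Z-1) - (M-1)Z = Z - 1 - 1 + Z = \cdots$; I would instead argue that $\cE_U + 1 = \chi(Z,U)\cdot(\text{something}) $ — i.e.\ reverse-engineer \eqref{Thilb} — so that $1 \in \cE_U$ iff $U$ fails to be "ideal-like," which forces $U=U_\mu$.

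For part \ref{tgrasshilb}, the identity to establish is the character equation
\[
T^*_{U_\mu} X - T^*_\mu Y = \cE_{U_\mu}
= A U_\mu + B U_\mu^* + C U_\mu U_\mu^*.
\]
Here $T_{U_\mu}X = \Hom(V_\mu, U_\mu) = V_\mu^* U_\mu = (Z^* - U_\mu^*)U_\mu = Z^* U_\mu - U_\mu^* U_\mu$ is the standard tangent space of the Grassmannian at a fixed point, so $T^*_{U_\mu}X = Z U_\mu^* - U_\mu U_\mu^*$ (taking duals, i.e.\ $z_i \mapsto z_i^{-1}$). On the other side, $T^*_\mu Y = \chi(I_\mu, I_\mu)^* - \chi(R,R)^*$ from \eqref{Thilb}; using the free resolution of $I_\mu$ (or directly $\chi(F,G) = F^* G M$ for the Koszul-type Euler pairing on $R=\C[x,y]$ with $M=(1-z_1)(1-z_2)$, where here $z_1,z_2$ enter with the sign conventions of \eqref{hilbtorus}), one gets $\chi(R,R) = R^* R M = Z^* Z M$ and $\chi(I_\mu,I_\mu) = V_\mu^* V_\mu M$, hence
\[
T^*_\mu Y = \big(Z^*Z - V_\mu^* V_\mu\big)M
\quad\text{(up to the dualization convention).}
\]
Then I would substitute $V_\mu = Z - U_\mu$, expand $Z^*Z - (Z^*-U_\mu^*)(Z-U_\mu) = Z^* U_\mu + U_\mu^* Z - U_\mu^* U_\mu$, multiply by $M$, and use $ZM = 1$ and $Z^*M^* = 1$ together with $M^* = z_1^{-1}z_2^{-1}M$ (since $M$ is anti-symmetric under $z_i \mapsto z_i^{-1}$ up to the monomial $z_1z_2$) to simplify. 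The terms should reorganize exactly into $-z_1z_2 U_\mu + (Z-1)U_\mu^* + (M-1)U_\mu U_\mu^*$, matching $A U_\mu + B U_\mu^* + C U_\mu U_\mu^*$. This is a finite but bookkeeping-heavy manipulation of Laurent monomials; I would present the two displayed character identities and then state that the difference is \eqref{Thilb} after substitution.

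The main obstacle I anticipate is getting the \emph{sign and duality conventions} consistent: the torus acts on $\C^2$ by \eqref{hilbtorus} with inverse weights, so $R$ as a representation has character $\fl_{z_1,z_2}M^{-1}$ with $M=(1-z_1)(1-z_2)$, but $\Ext$ groups and the Grassmannian tangent space involve duals, and the factor $z_1z_2$ in $A$ is precisely the discrepancy $M/M^*$ (the canonical bundle of $\C^2$). I would pin this down once at the start — computing $\chi(R,R)$ explicitly and checking it gives $0$ at $\mu = \varnothing$, i.e.\ $T^*_\varnothing Y = 0$ — and then everything else follows formally. A secondary subtlety in part \ref{hilbvanish} is that $U$ ranges over \emph{all} invariant subspaces of the possibly large finite-dimensional truncation $Z_{\le k}$, not just the ideal ones; but since $1 \in \cE_U$ is an open/combinatorial condition on the monomial content of $U$, checking it fails only for staircase $U$ is a self-contained lemma about subsets of $\Z_{\ge 0}^2$ closed under the relevant shifts, which I would dispatch by the standard "a finite monomial-ideal-cogenerated subspace is a Young diagram" argument.
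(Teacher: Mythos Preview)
Your approach to part~\ref{tgrasshilb} is essentially the paper's: compute $T^*_{U_\mu}X=ZU_\mu^*-U_\mu U_\mu^*$, compute $T^*_\mu Y$ from \eqref{Thilb} via the explicit Euler form $\chi(I_\mu,I_\mu)=z_1^{-1}z_2^{-1}MV_\mu^*V_\mu$, substitute $V_\mu=Z-U_\mu$, and simplify using $MZ=1$ and $M^*=z_1^{-1}z_2^{-1}M$. Once the duality conventions are fixed (your plan for this is sound), the bookkeeping goes through.

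For part~\ref{hilbvanish} there is a genuine gap. You correctly reduce to the constant term of $\cE_U$, but the sketch contains no actual argument for the key inequality. The rearrangement $(Z-1)U^*+(M-1)UU^*=(Z-UM)U^*-U^*$ is incorrect (the sign on $MUU^*$ is flipped and the $UU^*$ term is lost); the claim that $Z-UM$ equals $V$ precisely for ideal $V$ is false ($Z-U=V$ tautologically); and the direction of your conclusion is reversed, since $\la(\cE_U)$ vanishes when the constant term is \emph{positive}, so you need positivity for non-Young $U$, not its absence. The ``standard'' lemma you invoke only identifies which $U$ are complements of monomial ideals; it does not by itself explain why non-ideal $U$ force a positive constant term in $\cE_U$. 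The paper supplies this missing step with a short double-counting argument: color the box at $(i,j)$ black if $z_1^iz_2^j\in U$ and white otherwise, let $X_0$ be the lattice vertices whose upper-right neighboring box is black and lower-left is white, and $X_1$ the lattice edges with the analogous property. Expanding $M=(1-z_1)(1-z_2)$ termwise yields $[z_1^0z_2^0]\cE_U=|X_1|-|X_0|$. Every vertex in $X_0$ is incident to exactly two edges of $X_1$, while each edge of $X_1$ has at most two endpoints in $X_0$, so $|X_1|\ge|X_0|$; if $U$ is not a Young diagram then $X_1$ is nonempty and some edge in it has an endpoint outside $X_0$, giving strict inequality. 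This incidence argument is the content your proposal is missing.
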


\begin{proof}

For part \ref{hilbvanish},
it suffices to show that the constant term of
%
$\cE_U$
is positive unless $U=U_\mu$, in which case it is zero.
Consider the graph whose vertices are $\Z^2 \subset \R^2$,
and whose edge set $E$ connects horizontal and vertical neighbors. 
Color each box with lower-left corner $(i,j)$ white
if $z_1^iz_2^j$ is a weight of $V$, and black otherwise.
Define subsets by
\[X_0 = \left\{v \in \Z^{2} :\mbox{$v(\nearrow)$ is black, 
$v(\swarrow)$ is white}\right\}\]
\[X_1 = \left\{e \in E :\mbox{$e(\nearrow)$ is black,
$e(\swarrow)$ is white}\right\}\]
Here $v(\nearrow)$ is the upper-right neighboring box to $v$,
$e(\nearrow)$ is the upper or right neighboring box to the edge
$e$ depending on whether $e$ is horizontal or vertical, and
similarly for the southwest arrow.

Expanding $\cE$, we see that the constant term is
\[[z_1^0z_2^0] \cE_U = x_1-x_0,\quad x_i = |X_i|.\]
Now notice that every vertex in $X_0$ is the endpoint of 
exactly two edges in $X_1$, but each edge in $X_1$ always has at most
two endpoints in $X_0$, proving that $x_1 - x_0 \geq 0$. 
If $U$ does not come from a Young diagram, 
then the set $X_1$ is nonempty, and
there must be some edge in $X_1$ whose endpoints are not 
both in $X_0$, 
leading to strict inequality.

Part \ref{tgrasshilb} may be deduced easily from \eqref{Thilb}, and 
%
\[\chi (I_\mu,I_\nu) = z_1^{-1}z_2^{-1} M V_\mu^*V_\nu.\]
%

\end{proof}

Now restrict to a one-dimensional torus $z_i = z^{a_i}$, where the
$a_i$ are large enough that the fixed points of $Y$ are isolated.
Lemma \ref{vanishing} combined with localization on 
each fixed component $F$ with respect
to the two-dimensional torus proves that condition \ref{defcond} of the
theorem is satisfied. Condition \ref{rescond}
holds because $Z$ has the same character as $R$,
$B$ is the character of the total
space of its maximal ideal $\mathfrak{m}$, and
$xR$ and $yR$ are contained in $\mathfrak{m}$.
The others are obvious.

By lemma \ref{vanishing}, we see that
$\chi_{\Hilb_n}(\gamma_{\cU,m})=\chi_{\cY}(\gamma_{\cU,m})$,
where $\cU$ also denotes the tautological rank $n$ bundle 
on $Y=\Hilb_n$, which is pulled back from $X$.
By theorem \ref{mainthm}, we have 
%
\begin{equation}
\chi_{\Hilb_n} \left(\gamma_{\cU,m} \right) = 
\sum_j (-1)^j \chi_X \left(\gamma\lambda^j\left(\cE\right)\right)
\in \Z[[z]]
\label{hilbeq}
\end{equation}
for large enough $m$.
%
%
Since the answer is a rational functions of $z_i$, it is
determined by its values on the restricted torus. We may therefore
drop the assumption that $z=z^{a_i}$, and have an equality
of functions of two distinct torus variables $z_i$.
It may be checked that both sides are given by elements of
$\C(z_i)[z_i^m]$ for $m\geq 0$, and so \eqref{hilbeq} holds for all
nonnegative $m$. 
The point of this formula is the the right hand side is 
given explicitly by a sum of power series
with integer coefficients.



%
%

\subsection{The Hilbert scheme of a singular curve}

Let $C$ denote the singular curve $y^2 = x^3$, and consider the action
\[T = \C^* \circlearrowright C,\quad z\cdot(x,y) = (z^{-2}x,z^{-3}y).\]
Let $Y$ denote the Hilbert scheme of $n$ 
points in this curve, whose points correspond to ideals in
\[R = \C[x,y]/(y^2-x^3) \cong \C[u^2,u^3],\]
with $\dim_{\C} R/I = n$.
The torus fixed points of $Y$ are those of the form
\[I_S = \bigoplus_{i \in S} \C\cdot u^i \subset R,\]
for $S$ a sub-semigroup of $\{0,2,3,4,...\}$.

There is an injection $Y \hookrightarrow X$, where the
data for $X$ is given by
\[Z=\ch R = \fl_z (1-z^6)M^{-1},\quad 
M = (1-z^2)(1-z^3).\]
Now let
\[A=-z^5,\quad B= Z+z^6-1,\quad C = M-1.\]
We find that $\cY_U$ vanishes at all fixed points in $X$
except those whose weights form a semigroup $S$.
We would find that $T_S^* X-\cE_S$ does
not consist entirely of nonnegative weights, but that
the signed dimension is always $n$. This corresponds to
the fact that the Hilbert scheme of points on this curve
only has a virtual tangent bundle of expected dimension $n$.
Its character may be calculated by realizing $Y$ as an
lci subvariety of the Hilbert scheme of $n$ points in the plane.
For a reference, see \cite{Sh}. 

In a similar way to the last subsection, theorem \ref{mainthm}
gives a power series formula for the Euler characteristic, 
but now for $m\geq 1$.

\subsection{The affine Grassmannian}

Let $G=SL(2,\C)$, and consider the affine Grassmannian 
\[Y=LG_{\C}/L^+G_{\C},\]
 where $LG$ is the space of maps from the circle into 
$G$, and $L^+G$ are those maps which extend to a holomorphic function in
the disc of radius $1$. 

In \cite{S}, Segal noted that there should
be a proof of the Weyl-Ka\c{c} character formula using this variety,
which is analogous of the well-known geometric proof of the Weyl character
formula using $K$-theoretic localization combined with Borel-Weil-Bott,
see \cite{CG}. He also pointed 
out that there was a gap in the reasoning due to the
fact that $Y$ is infinite-dimensional with singular closure, 
and the explanation that the higher cohomology groups vanish. 
This topic, and generalizations to
the related flag varieties have been studied by several authors, including
\cite{Ku,T1,T2}. 

We now demonstrate how theorem \ref{mainthm} 
can be used to circumvent these two difficulties in the case
of the Jacobi triple product formula, 
which corresponds to the basic representation for $G=SL(2,\C)$.
It would be interesting to see how far this approach 
generalizes.
Let us ignore technicalities and simply motivate the
choice of data for theorem \ref{mainthm}.
There is an action of a two-dimensional torus on $Y$ by
\[
(g \cdot f)(x) = \Ad
\left(\begin{array}{cc} z^{-1} & \\ & z \end{array}\right)\cdot
f(q x),\quad g = (q,z).\]
%
Ignoring the infinite-dimensionality of $Y$,
we can write down the character of the cotangent bundle to this space
at a fixed point as follows. 
The cotangent bundle at the image of the identity in $Y$
is given by
%
\begin{equation}
\label{taff}
T^*_1Y = \left(L\mathfrak{g}_{\C} / L^+ \mathfrak{g}_{\C}\right)^*=
\frac{q}{1-q}(z^2+1+z^{-2}).
\end{equation}
The character at a general fixed point can be extracted from by
applying elements of the affine Weyl group.

Let $\cH = L\C^2$, the Hilbert space
of maps to $\C^2$. Then $LG_{\C}$ acts in the obvious way on this space,
and $L^+G_{\C}$ is precisely the subgroup that preserves the subspace
$V \subset \cH$ of all maps which are holomorphic at the origin.
The action on $\cH$ induces an inclusion $Y \subset X$ in which
$1$ maps to $V$,
where $X$ is the Sato Grassmannian of half-infinite dimensional subspaces
of $\cH$, by taking the orbit space of $V$. The character of the cotangent
bundle at $V$ is given by
\begin{equation}
\label{tsato}
\ch T^*_V X = \frac{q}{(1-q)^2} (z^2+2+z^{-2}),
\end{equation}
%
%
%

Now let us derive the Jacobi triple product.
For each $n$, let 
%
%
\[M = 1-q,\quad W=q^{-n}(z+z^{-1}),\quad Z = q^{-n}(z+z^{-1})\fl_zM^{-1},\]
\[A=0,\quad B=Z-W,\quad C=M-1,\quad X^{(n)}=G_{2n,Z},\]
so that $Z\subset \cH$, and includes the 
whole space as $n$ becomes large. As in section \ref{hilbsubsec},
we find that the projection formula holds for the two-dimensional
torus, and that $m\geq 0$ is sufficient.

We may check that $\lambda(\cE^{(n)})$ vanishes at all fixed points of 
$X^{(n)}$ except those whose complementary subspace has character
\[U_k = \sum_{-n \leq i \leq k-1} zq^i+\sum_{-n \leq i \leq -k-1} z^{-1}q^i,\]
and the character at such a point satisfies
\[\lim_{n\rightarrow \infty} 
\left(T_k^*X^{(n)}-\cE^{(n)}_k-T^*_k Y\right)=\frac{q}{1-q}.\]
%

Now taking the limit over $n$ of \eqref{projeq}
%
gives
\[\sum_k (q;q)_\infty^{-1}\theta(z^2,q)^{-1}
\left(z^{4k}q^{2k^2+k}-z^{4k-2}q^{2k^2-k}\right)=\]
\begin{equation}
\label{jtp}
\sum_{j} (-1)^jq^j \lim_{n \rightarrow \infty}\chi_X 
\left(\lambda^j(T^*X)\right)=(q;q)_\infty^{-1},
\end{equation}
where
\[(x;q)_\infty = \prod_{i \geq 0} (1-xq^i),\quad 
\theta(x;q) = (q;q)_\infty (xq;q)_\infty (x^{-1};q)_\infty.\]
The second equality follows from
\[\chi_{\Gr(k,n)}\left(\lambda^j(T^*)\right) = (-1)^jp(j),\]
for sufficiently large $k,n-k$,
where $p(j)$ is the number of partitions of $j$,
and the answer holds equivariantly for any group action on $\C^n$.

%

%
%
%

\end{document}